\newtheorem{theorem}{Theorem}[section]
\newtheorem{lemma}[theorem]{Lemma}
\newtheorem{corollary}[theorem]{Corollary}
\theoremstyle{definition}
\newtheorem{definition}[theorem]{Definition}
\theoremstyle{remark}
\newtheorem{remark}[theorem]{Remark}
\numberwithin{equation}{section}
\begin{document}

\title{Limits of tangents of a quasi-ordinary hypersurface}


\author{Ant\' onio Ara\' ujo}
\address{}
\curraddr{}
\email{ant.arj@gmail.com}
\thanks{}

\author{Orlando Neto}
\address{}
\curraddr{}
\email{orlando60@gmail.com}
\thanks{}

\subjclass[2000]{Primary 14B05, 32S05}

\date{\today}

\dedicatory{}

\commby{}

\begin{abstract}
We compute explicitly the limits of tangents of a quasi-ordinary singularity in terms of its special monomials. We show that the set of limits of tangents of $Y$ is essentially a topological invariant of $Y$. 
\end{abstract}

\maketitle


\section{Introduction}
\noindent
The study of the limits of tangents of a complex hypersurface singularity was mainly developped by Le Dung Trang and Bernard Teissier
(see \cite{LeTe1988} and its bibliography).
Chunsheng Ban \cite{Ban1994} computed the set of limits of tangents $\Lambda$ of a quasi-ordinary singularity $Y$ when
$Y$ has only one very special monomial (see Definition \ref{very_special}).

\noindent
The main achievement of this paper is the explicit computation of the limits of tangents of an arbitrary quasi-ordinary hypersurface singularity
(see Theorems \ref{case_l_1}, \ref{case_g_1} and \ref{case_eq_1}).
Corollaries \ref{cor1}, \ref{cor2} and \ref{cor3}  show that the set of limits of tangents of $Y$ comes quite close to being a topological invariant of $Y$.
Corollary \ref{cor2} shows that $\Lambda$ is a topological invariant of $Y$ when the tangent cone of $Y$ is a hyperplane.
Corollary \ref{cor4} shows that the triviality of the set of limits of tangents of $Y$ is a topological invariant of $Y$.

\noindent
Let $X$ be a complex analytic manifold.
Let $\pi:T^*X\to X$ be the cotangent bundle of $X$.
Let $\Gamma$ be a germ of a Lagrangean variety of $T^*X$ at a point $\alpha$.
We say that $\Gamma$ is in \em generic position \em  if $\Gamma\cap\pi^{-1}(\pi (\alpha))=\mathbb C\alpha$.
Let $Y$ be a hypersurface singularity of $X$.
Let $\Gamma$ be the conormal $T^*_YX$ of $Y$.
The Lagrangean variety $\Gamma$ is in generic position if and only if $Y$ is the germ of an hypersurface with trivial set of limits of tangents.

\noindent
Let $\mathcal M$ be an holonomic $\mathcal D_X$-module.
The characteristic variety of $\mathcal M$ is a Lagrangean variety of $T^*X$.
The characteristic varieties in generic position have a central role in $\mathcal D$-module theory
(cf. Corollary 1.6.4 and Theorem 5.11 of \cite{Ka1981} and Corollary 3.12 of \cite{Neto2001}).
It would be quite interesting to have good characterizations of the hypersurface singularities with trivial set of limits of tangents.
Corollary \ref{cor4} is a first step in this direction.

\noindent
After finishing this paper, two questions arose naturally:

\em
\noindent
Let $Y$ be an hypersurface singularity such that its tangent cone is an hyperplane.
Is the set of limits of tangents of $Y$ a topological invariant of $Y$?

\noindent
Is the triviality of the set of limits of tangents of an hypersurface a topological invariant of the hypersurface?
\em

\medskip

\noindent
Let $p:\mathbb{C}^{n+1}\rightarrow \mathbb{C}^{n}$ be the projection that takes $(x,y)=(x_1,\ldots, x_n, y)$ into $x$. 
Let $Y$ be the germ of a hypersurface of $\mathbb{C}^{n+1}$ defined by $f \in \mathbb{C}\{x_1,\ldots ,x_n,y\}$. 
Let $W$ be the singular locus of $Y$. 
The set $Z$ defined by the equations $f=\partial f / \partial y=0$ is called the \em apparent contour \em of $f$ relatively to the projection $p$. 
The set $\Delta=p(Z)$ is called the \em discriminant \em of $f$ relatively to the projection $p$.

\noindent Near $q \in Y \setminus Z$ there is  one and only one function $\varphi \in \mathcal{O}_{\mathbb{C}^{n+1},q}$ such that $f(x,\varphi(x))=0$. 
The function $f$ defines implicitly $y$ as a function of $x$. Moreover, 
\begin{equation}\label{implicit_derivative}
\frac{\partial y}{\partial x_i} = \frac{\partial \varphi}{\partial x_i} = - \dfrac{\partial f/\partial x_i}{\partial f / \partial y} \text{ on $Y \setminus Z$}.
\end{equation}
Let $\theta=\xi_1 dx_1+\ldots \xi_n dx_n + \eta dy$ be the canonical 1-form of the cotangent bundle 
$T^{*}\mathbb{C}^{n+1}=\mathbb{C}^{n+1}\times\mathbb{C}_{n+1}$. 
An element of the projective cotangent bundle $\mathbb{P}^{*}\mathbb{C}^{n+1}=\mathbb{C}^{n+1}\times \mathbb{P}_n$ i
s represented by the coordinates
\begin{equation*}
(x_1, \ldots, x_n, y;\xi_1: \cdots : \xi_n: \eta).
\end{equation*}
We will consider in the open set $\{ \eta \neq 0 \}$ the chart
\begin{equation*}
(x_1, \ldots, x_n, y,p_1, \ldots, p_n), 
\end{equation*}
where  $p_i=-\xi_i/\eta, 1 \leq i \leq n$. Let $\Gamma_0$ be the graph of the map from $Y \setminus W$ into $\mathbb{P}_n$ defined by 
\begin{equation*}
(x,y) \mapsto \left(\frac{\partial f}{\partial x_1}: \cdots : \frac{\partial f}{\partial x_n} : \frac{\partial f}{\partial y}\right ).
\end{equation*}
Let $\Gamma$ be the smallest closed analytic subset of $\mathbb{P}^*\mathbb{C}^{n+1}$ that contains $\Gamma_0$. The analytic set $\Gamma$ is a Legendrian subvariety of the contact manifold  $\mathbb{P}^*\mathbb{C}^{n+1}$. The projective algebraic set $\Lambda=\Gamma \cap \pi^{-1}(0)$ is called the \em set of limits of tangents \em of $Y$. 
\begin{remark}
It follows from (\ref{implicit_derivative}) that
\begin{equation*}
\left(\frac{\partial f}{\partial x_1}: \cdots : \frac{\partial f}{\partial x_n} : \frac{\partial f}{\partial y}\right )=\left(-\frac{\partial y}{\partial x_1}: \cdots : -\frac{\partial y}{\partial x_n} : 1 \right) \text{ on } Y \setminus Z.
\end{equation*}
\end{remark}
\noindent Let $c_1, \ldots, c_n$ be positive integers. 
We will denote by $\mathbb{C}\{x_1^{1/c_1}, \ldots, x_n^{1/c_n}\}$ the $\mathbb{C}\{x_1, \ldots, x_n\}$ 
algebra given by the immersion from $\mathbb{C}\{x_1, \ldots, x_n\}$ into $\mathbb{C}\{t_1, \ldots, t_n\}$ 
that takes $x_i$ into $t_i^{c_i}, 1 \leq i \leq n$. We set $x_i^{1/c_i}=t_i$. 
Let $a_1, \ldots, a_n$ be positive rationals. Set $a_i=b_i/c_i, 1 \leq i \leq n$, 
where $(b_i,c_i)=1$. 
Given a ramified monomial $M=x_1^{a_1} \cdots x_n^{a_n}=t_1^{b_1} \cdots t_n^{b_n}$ we set 
$\mathcal{O}(M)=\mathbb{C}\{x_1^{1/c_1}, \ldots, x_n^{1/c_n}\}$.

\noindent Let $Y$ be a germ at the origin of a complex hypersurface of $\mathbb{C}^{n+1}$. We say that $Y$ is a quasi-ordinary singularity if $\Delta$ is a divisor with normal crossings. We will assume that there is $l \leq m$ such that $\Delta=\{x_1 \cdots x_l=0\}$.

\noindent If $Y$ is an irreducible quasi-ordinary singularity there are ramified monomials \linebreak $N_0, N_1, \ldots, N_m, g_i \in \mathcal{O}(N_i), 0 \leq i \leq m$, such that $N_0=1$, $N_{i-1}$ divides $N_i$ in the ring $\mathcal{O}(N_i)$, $g_i$ is as unity of $\mathcal{O}(N_i), 1 \leq i \leq m$, $g_0$ vanishes at the origin and the map $x \mapsto (x, \varphi (x))$ is a parametrization of $Y$ near the origin, where
\begin{equation}\label{parametrization}
\varphi=g_0+N_1g_1+\ldots+N_mg_m.
\end{equation}
Replacing $y$ by $y-g_0$, we can assume that $g_0=0$.
The monomials $N_i, 1 \leq i \leq m$, are unique and determine the topology of $Y$ (see \cite{Lipman1988}). They are called the \em special monomials \em of $f$. We set $\tilde{\mathcal{O}}=\mathcal{O}(N_m)$. 
\begin{definition}\label{very_special}
We say that a special monomial $N_i$, $1 \leq i \leq m$, is \em very special \em if $\{N_i=0\}\neq\{N_{i-1}=0\}$.
\end{definition}
\noindent Let $M_1, \ldots, M_g$ be the very special monomials of $f$, where $M_k=N_{n_k}, 1=n_1<n_2<\ldots <n_g, 1 \leq k \leq g$. Set $M_0=1, n_{g+1}=n_g+1$. There are units $f_i$ of $\mathcal{O}(N_{n_{i+1}-1})$, $1 \leq i \leq g$, such that

\begin{equation}\label{very_special_param}
\varphi=M_1f_1+\ldots+M_gf_g.
\end{equation}

\section{Limits of tangents}

\noindent After renaming the variables $x_i$ there are integers $m_k, 1 \leq k \leq g+1$, and positive rational numbers $a_{kij}, 1 \leq k \leq g, 1 \leq i \leq k, 1 \leq j \leq m_k$ such that 

\begin{equation}
M_k=\prod_{i=1}^k \prod_{j=1}^{m_k} x_{ij}^{a_{kij}}, \qquad 1 \leq k \leq g.
\end{equation}
The canonical 1-form of $\mathbb{P}^{*}\mathbb{C}^{n+1}$ becomes 
\begin{equation}
\theta=\sum_{i=1}^{g+1}\sum_{j=1}^{m_i} \xi_{ij}dx_{ij}.
\end{equation}
We set $p_{ij}=-\xi_{ij}/\eta, 1 \leq i \leq g+1, 1 \leq j \leq m_i$. Remark that

\begin{equation}\label{dy_dx}
\frac{\partial y}{ \partial x_{ij}}=a_{iij}\frac{M_i}{x_{ij}}\sigma_{ij},
\end{equation}
where $\sigma_{ij}$ is a unit of $\tilde{\mathcal{O}}$.

\begin{theorem}\label{teo_less_than_one}
If $\sum_{i=1}^{m_1} a_{11i}<1$, $\Lambda \subset \{ \eta=0 \}$.
\end{theorem}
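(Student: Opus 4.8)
The plan is to translate the inclusion $\Lambda\subset\{\eta=0\}$ into a statement about the unboundedness of the slopes $p_{ij}$ along every approach to the origin, and then to derive a contradiction from the exponents of the first very special monomial $M_1$. Recall that in the chart $\{\eta\neq 0\}$ the Legendrian variety $\Gamma$ is the closure of the graph $\Gamma_0$, parametrized over $Y\setminus W$ by $x\mapsto(x,\varphi(x),(p_{ij}(x)))$, and that by the Remark together with (\ref{implicit_derivative}) we have $p_{ij}=\partial y/\partial x_{ij}$ on $Y\setminus Z$. A point of $\Lambda$ lying in $\{\eta\neq 0\}$ is precisely a point of $\Gamma$ over $(x,y)=(0,0)$ with finite coordinates $p_{ij}$, and such a point exists only if there is a sequence $x^{(k)}\to 0$ in the domain of the parametrization along which all the $p_{ij}(x^{(k)})$ stay bounded. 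Thus it suffices to prove that no such sequence exists, i.e.\ that along every $x^{(k)}\to 0$ the family $(p_{ij})$ is unbounded.

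First I would reduce to the monomials with $i=1$. Assume, for contradiction, that there is a sequence $x^{(k)}\to 0$ along which all $p_{ij}$ remain bounded. By (\ref{dy_dx}),
\begin{equation*}
p_{1j}=a_{11j}\,\frac{M_1}{x_{1j}}\,\sigma_{1j},\qquad 1\leq j\leq m_1,
\end{equation*}
where each $\sigma_{1j}$ is a unit of $\tilde{\mathcal{O}}$, so that $|\sigma_{1j}|$ is bounded away from $0$ near the origin, while $a_{11j}>0$ is a fixed constant. Hence the boundedness of $p_{1j}$ forces $|M_1/x_{1j}|$ to be bounded for each $j$.

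The key computation is to combine these bounds using the exponents $a_{11j}$ themselves as weights. Writing $s=\sum_{j=1}^{m_1}a_{11j}$ for the total degree of $M_1$ and using $M_1=\prod_{j=1}^{m_1}x_{1j}^{a_{11j}}$, I would form
\begin{equation*}
\prod_{j=1}^{m_1}\left|\frac{M_1}{x_{1j}}\right|^{a_{11j}}
=\frac{|M_1|^{s}}{\prod_{j=1}^{m_1}|x_{1j}|^{a_{11j}}}
=\frac{|M_1|^{s}}{|M_1|}=|M_1|^{\,s-1}.
\end{equation*}
The left-hand side is a product of bounded quantities raised to positive powers, hence bounded along $x^{(k)}$. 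On the other hand $|M_1|\to 0$ as $x^{(k)}\to 0$, and the hypothesis $s<1$ gives $s-1<0$, so the right-hand side $|M_1|^{s-1}\to\infty$. This contradiction shows that the $p_{ij}$ cannot all stay bounded, whence $\Lambda\cap\{\eta\neq 0\}=\emptyset$ and $\Lambda\subset\{\eta=0\}$.

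The routine points to verify are that $|M_1|\to 0$ along the sequence (immediate, since every $x_{1j}\to 0$ and the $a_{11j}$ are positive) and that the fractional powers are read at the level of absolute values, where they are unambiguous. The one place that requires genuine insight, and which I expect to be the main obstacle, is the choice of the $a_{11j}$ as weights in the product: it is exactly this choice that collapses the contributions of the individual variables into the single factor $|M_1|^{s-1}$ and thereby converts the degree hypothesis $s<1$ into the desired blow-up; an unweighted combination of the $|M_1/x_{1j}|$ would not isolate the total degree $s$.
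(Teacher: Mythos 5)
Your proof is correct, and its engine is the same as the paper's: combine the relations $p_{1j}=a_{11j}(M_1/x_{1j})\sigma_{1j}$ of (\ref{dy_dx}) multiplicatively with positive weights $c_j$ chosen so that each variable $x_{1j}$ ends up with negative exponent, forcing the weighted product of the $p_{1j}$ to blow up at the origin and hence $\eta$ to vanish on $\Lambda$. The difference lies in how the weights are produced. The paper works with the algebraic identity (\ref{prod2}) for integer weights and must exhibit $c_1,\dots,c_{m_1}$ satisfying the system (\ref{system_a}), namely $a_k\sum_j c_j<c_k$ for all $k$; constructing such a tuple by the recursive choice $l_j<c_j<u_j$ is the bulk of its proof. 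You observe that the weights $c_j=a_{11j}$ work outright: the product telescopes to $|M_1|^{s-1}$ with $s=\sum_j a_{11j}<1$, which is exactly the statement that $(a_{111},\dots,a_{11m_1})$ solves (\ref{system_a}) (indeed $a_k s<a_k$), and after clearing denominators this gives an admissible integer solution, so $\Omega\neq\emptyset$ in one line. Your reformulation in the chart $\{\eta\neq 0\}$ via boundedness along sequences is also sound, since $\Gamma$ is the closure of $\Gamma_0$ and a point of $\Lambda$ with $\eta\neq0$ is precisely a limit along which the $p_{ij}$ stay bounded; your side remarks (the $\sigma_{1j}$ are units, and $M_1\to 0$ because the sequence avoids $\Delta$ and all $a_{11j}>0$) cover the routine verifications. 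What your route buys is economy---the recursive construction disappears; it loses nothing for this theorem, and in fact the same choice of weights proportional to the exponents would also shortcut the analogous constructions in Theorems \ref{teoprodnull} and \ref{teo_contained_cone}.
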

\begin{proof}
Set $m=m_1$, $x_i=x_{1i}$ and $a_i=a_{11i}$, $1 \leq i \leq m$.
Given positive integers $c_1, \ldots, c_m$, it follows from (\ref{dy_dx}) that
\begin{equation}\label{prod_of_p}
\prod_{i=1}^{m} p_{i}^{c_i}=\prod_{i=1}^m x_i^{a_i\sum_{j=1}^{m}c_j-c_i} \phi,
\end{equation}
for some unit $\phi$ of $\tilde{\mathcal{O}}$. By $(\ref{very_special_param})$ and $(\ref{dy_dx})$,
\begin{equation}\label{phi_null_explicit}
\phi (0)=f_1(0)^{\sum_{j=1}^m c_j}\prod_{j=1}^ma_j^{c_j}.
\end{equation}
Hence
\begin{eqnarray}\label{prod2}
\eta^{\sum_{i=1}^m c_i}=\psi \prod_{i=1}^{m} \xi_{i}^{c_i} x_i^{ c_i-a_i\sum_{j=1}^{m}c_j } ,
\end{eqnarray}
for some unit $\psi$.
If there are integers $c_1,\dots,c_m$ such that the inequalities 
\begin{equation}\label{system_a}
\begin{array}{lr}
a_k\sum_{j=1}^m c_j<c_k, & 1 \leq k \leq m,
\end{array}
\end{equation}
hold, the result follows from $(\ref{prod2})$.
Hence it is enough to show that the set $\Omega$ of the m-tuples of rational numbers $(c_1, \ldots, c_m)$ that verify the inequalities $(\ref{system_a})$ is non-empty.
We will recursively define positive rational numbers $l_j,c_j,u_j$ such that
\begin{equation}\label{ineqlcu}
l_j<c_j<u_j, 
\end{equation}
j=1,\ldots,m. Let $c_1, l_1, u_1$ be arbitrary positive rationals verifying $(\ref{ineqlcu})_1$. 
Let $1<s \leq m$. If $l_i,c_i,u_i$ are defined for $i\leq s-1$, set    
\begin{equation}\label{lu}
l_s=\frac{a_s\sum_{j=1}^{s-1}c_j}{1-\sum_{j=s}^m a_j}, ~~~ u_s=(a_s/a_{s-1})c_{s-1}.
\end{equation}
Since $\sum_{j \geq s}a_j<1$ and 
\begin{eqnarray*}
u_s-l_s &=& \frac{a_s}{a_{s-1}(1-\sum_{j=s}^m a_j)}\left ( (1-\sum_{j=s-1}^m a_j)c_{s-1} -a_{s-1}\sum_{j<s-1} c_j\right ) \\ 
&=& \frac{a_s}{a_{s-1}(1-\sum_{j=s}^m a_j)}\left( (1-\sum_{j=s-1}^m a_j)(c_{s-1}-l_{s-1})\right),
\end{eqnarray*}
it follows from $(\ref{ineqlcu})_{s-1}$ that $l_s<u_s$.
\noindent Let $c_s$ be a rational number such that $l_s<c_s<u_s$. Hence $(\ref{ineqlcu})_s$ holds for $s \leq m$.

\noindent Let us show that $(c_1,\ldots,c_m) \in \Omega$. 
Since $c_k<u_k$, then 
$$c_k<\frac{a_k}{a_{k-1}}c_{k-1}, \text{ for } k \geq 2.$$
Then, for $j<k$,
$$c_k<\frac{a_{k}}{a_{k-1}}\frac{a_{k-1}}{a_{k-2}}\cdots\frac{a_{j+1}}{a_j}c_j=\frac{a_{k}}{a_j}c_j.$$
Hence,
\begin{equation}\label{switch}
a_kc_j<a_jc_k, \text{ for }  j>k.
\end{equation}
Since $l_k<c_k$,
\[
a_k\sum_{j=1}^{k-1}c_j<c_k-\sum_{j=k}^m a_jc_k.
\] 
Hence, by $(\ref{switch})$,
\[
a_k\sum_{j=1}^{k-1}c_j<c_k-\sum_{j=k}^m a_kc_j.
\] 
Therefore $a_k\sum_{j=1}^{m}c_j<c_k$.
\end{proof}
\begin{theorem}\label{teoprodnull}
Let $1 \leq k \leq g$. Let $I \subset \{1, \ldots, m_k\}$. Assume that one of the following three hypothesis is verified: 
\begin{enumerate}
\item{$\sum_{j \in I}a_{kkj}>1$\em; \em}
\item{$k=1$, $\sum_{j \in I}a_{11j}=1$ \em and \em $\sum_{j=1}^{m_1}a_{11j}>1$\em; \em}
\item{$k \geq 2$ and $\sum_{j \in I}a_{kkj}=1$\em. \em}
\end{enumerate}
Then $\Lambda \subset \{\prod_{j \in I} \xi_{kj}=0 \}$.
\end{theorem}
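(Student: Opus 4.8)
The plan is to imitate the computation of Theorem \ref{teo_less_than_one}, but now to extract a product of the horizontal coordinates $\xi_{kj}$ instead of $\eta$. Recall from the Remark and (\ref{dy_dx}) that on $\Gamma_0$ one has $p_{kj}=-\xi_{kj}/\eta=a_{kkj}(M_k/x_{kj})\sigma_{kj}$, with $\sigma_{kj}$ a unit of $\tilde{\mathcal{O}}$. Fixing positive integers $c_j$, $j\in I$, and writing $C=\sum_{j\in I}c_j$, I would raise this identity to the power $c_j$ and multiply over $j\in I$ to obtain, exactly as in (\ref{prod_of_p})--(\ref{prod2}),
\[
\prod_{j\in I}\xi_{kj}^{c_j}=\psi\,\eta^{C}\,M_k^{C}\prod_{j\in I}x_{kj}^{-c_j},
\]
where $\psi=(-1)^{C}\prod_{j\in I}a_{kkj}^{c_j}\sigma_{kj}^{c_j}$ is a unit of $\tilde{\mathcal{O}}$. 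Both sides are homogeneous of degree $C$ in the fibre coordinates $(\xi,\eta)$, so this identity, valid on the dense subset $\Gamma_0$, extends by continuity to $\Gamma=\overline{\Gamma_0}$, and in particular to $\Lambda=\Gamma\cap\pi^{-1}(0)$.

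Set $\mathrm{Mon}=M_k^{C}\prod_{j\in I}x_{kj}^{-c_j}$. Since $M_k=\prod_{i\le k}\prod_l x_{il}^{a_{kil}}$, this is a ramified monomial whose exponent is $a_{kkj}C-c_j$ on $x_{kj}$ for $j\in I$, and $a_{kkj}C>0$ (resp. $a_{kil}C>0$) on the remaining $x_{kj}$ (resp. on $x_{il}$ with $i<k$). The key observation is that if every exponent of $\mathrm{Mon}$ is $\ge 0$ and at least one is $>0$, then $\mathrm{Mon}\to 0$ as $x\to 0$; choosing bounded representatives of the (projective) fibre coordinates and using that $\psi$ and $\eta$ stay bounded, the displayed identity then forces $\prod_{j\in I}\xi_{kj}^{c_j}=0$ on $\Lambda$, hence $\prod_{j\in I}\xi_{kj}=0$ there since the $c_j$ are positive. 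Thus the whole problem reduces to choosing the $c_j$ so that $\mathrm{Mon}$ has the stated sign pattern. I would take $c_j=L\,a_{kkj}$, where $L$ clears denominators so that the $c_j$ are positive integers; writing $S=\sum_{j\in I}a_{kkj}$ this gives $C=LS$ and turns the exponent of $x_{kj}$ ($j\in I$) into $a_{kkj}L(S-1)$.

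The three hypotheses then correspond exactly to the three ways of guaranteeing that $\mathrm{Mon}\to 0$. Under (1), $S>1$, so the exponents $a_{kkj}L(S-1)$ are themselves strictly positive and $\mathrm{Mon}\to 0$ directly. Under (2) and (3), $S=1$, so these exponents vanish and the $x_{kj}$, $j\in I$, drop out of $\mathrm{Mon}$; one must then produce a surviving variable with positive exponent. In case (3), since $k\ge 2$ the monomial $M_k$ genuinely involves some $x_{il}$ with $i<k$, whose exponent $a_{kil}L>0$ supplies the required vanishing. In case (2) there are no such lower variables ($k=1$), but the hypothesis $\sum_{j=1}^{m_1}a_{11j}>1=S$ guarantees an index $j\notin I$ with $a_{11j}>0$, and the corresponding factor $x_{1j}^{a_{11j}L}$ does the job. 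In every case $\mathrm{Mon}$ is a monomial with nonnegative exponents and at least one positive exponent, so $\mathrm{Mon}\to 0$ and the conclusion follows.

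The main obstacle is the two equality cases (2) and (3): there the ``diagonal'' variables $x_{kj}$, $j\in I$, contribute nothing in the limit, and the vanishing of $\prod_{j\in I}\xi_{kj}$ must be squeezed out of the remaining variables. Making this work is precisely what dictates the asymmetry between the hypotheses --- the extra requirement $\sum_{j=1}^{m_1}a_{11j}>1$ is needed only for $k=1$, because for $k\ge 2$ the lower-index variables of $M_k$ are automatically available. By comparison, the homogeneity bookkeeping, the clearing of denominators to integral $c_j$, and the passage to the limit with bounded fibre representatives are routine, being already implicit in the proof of Theorem \ref{teo_less_than_one}.
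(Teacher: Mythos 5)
Your proof is correct and follows essentially the same route as the paper: both extract from (\ref{dy_dx}) the identity $\prod_{j\in I}\xi_{kj}^{c_j}=\psi\,\eta^{C}M_k^{C}\prod_{j\in I}x_{kj}^{-c_j}$ and then choose the $c_j$ so that the resulting monomial in $x$ vanishes at the origin, with the equality cases (2) and (3) rescued exactly as you say --- by the surviving variables of $M_k$ outside $\{x_{kj}\}_{j\in I}$, which is what the paper encodes as ``$\varepsilon\in\tilde{\mathcal O}$ with $\varepsilon(0)=0$''. Your closed-form choice $c_j=La_{kkj}$ is a genuine (and welcome) simplification of the paper's recursive constructions of the $c_j$ in Cases 1 and 2, and one checks directly that it solves the paper's systems (\ref{system_b}) and (\ref{system_c}).
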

\begin{proof}
Case 1: We can assume that $I=\{1,\ldots,n\}$, where $1 \leq n \leq m_k$. Set $a_i=a_{kki}$.
Given positive integers $c_1, \ldots, c_n$, it follows from (\ref{dy_dx}) that
\begin{equation}\label{prod_of_p_b}
\prod_{i=1}^{n} \xi_{ki}^{c_i}=
\prod_{i=1}^{n} x_{ki}^{a_{i}\sum_{j=1}^{n}c_j-c_i} \eta^{\sum_{i=1}^{n}c_i} \varepsilon,
\end{equation}
where $\varepsilon \in \widetilde{\mathcal{O}}$.
Hence it is enough to show that there are positive rational numbers $c_1, \ldots, c_n$ such that
\begin{equation}\label{system_b}
a_k(\sum_{j=1}^{n}c_j)-c_k>0, ~~~ ~~~1\leq k \leq n.
\end{equation}
We will recursively define $l_j,c_j,u_j \in \left ]0,+\infty\right ]$ such that $c_j,l_j \in \mathbb{Q}$, 
\begin{equation}\label{ineqlcu_b}
l_j<c_j<u_j, 
\end{equation}
j=1,\ldots,n, and $u_j \in \mathbb{Q}$ if and only if $\sum_{i=j}^n a_i<1$. Choose $c_1, l_1, u_1$ verifying (\ref{ineqlcu_b}). 
Let $1<s \leq n-1$. Suppose that $l_i,c_i,u_i$ are defined for $1 \leq i\leq s-1$. If $\sum_{j=s}^n a_j<1$, set    
\begin{equation}\label{lu_b}
l_s=(a_s/a_{s-1})c_{s-1}, ~~~ u_s=\frac{a_s\sum_{j=1}^{s-1} c_j}{1-\sum_{j=s}^n a_j}.
\end{equation}

\noindent
Since
\begin{eqnarray*}
u_s-l_s &=& \frac{a_s}{a_{s-1}(1-\sum_{j=s}^n a_j)}\left ( a_{s-1}\sum_{j=1}^{s-2}c_j- c_{s-1}(1-\sum_{j=s-1}^n a_j)\right )\\
& \leq & \frac{a_s}{a_{s-1}(1-\sum_{j=s}^n a_j)}\left ( (1-\sum_{j=s-1}^n a_j) (u_{s-1}-c_{s-1}) \right ),
\end{eqnarray*}
it follows from $(\ref{ineqlcu_b})_{s-1}$ that $l_s<u_s$.

\noindent If $\sum_{j=s}^n a_j\geq 1$, set $l_s$ as above and $u_s=+\infty$.
 
\noindent We choose a rational number $c_s$ such that $l_s<c_s<u_s$. Hence $(\ref{ineqlcu_b})_s$ holds for $1 \leq s\leq n$.

\noindent Let us show that $c_1,\ldots,c_n$ verify $(\ref{system_b})$. We will proceed by induction. First we will show that $c_1,\ldots,c_n$ verify $(\ref{system_b})_n$.
Suppose that $a_n<1$. Since $c_n<u_n$, we have that
$$c_n<\frac{a_n \sum_{j=1}^{n-1}c_j}{1-a_n}.$$
Hence $a_n\sum_{j=1}^nc_j>c_n$.
\noindent If $a_n\geq 1$, then 
$$a_n\sum_{j=1}^n c_j\geq \sum_{j=1}^n c_j>c_n.$$
Hence $(\ref{system_b})_n$ is verified.
\noindent Assume that $c_1,\ldots,c_n$ verify $(\ref{system_b})_k$, $2 \leq k \leq n$. Since $c_k>l_k$,
$$a_k \sum_{j=1}^n c_j>c_k>\frac{a_k}{a_{k-1}}c_{k-1}.$$
Hence $a_{k-1} \sum_{j=1}^n c_j>c_{k-1}$. Therefore $(c_1,\ldots,c_n)$ verify $(\ref{system_b})_{k-1}$.

\noindent Case 2:
Set $a_j=a_{11j}$ and $x_j=x_{1j}$. 
We can assume that $I=\{1,\ldots ,n\}$, where $1 \leq n \leq m_1$.
Given positive integers $c_1, \ldots, c_n$, it follows from (\ref{parametrization}) that

\begin{equation}\label{prodxic}
\prod_{i=1}^n \xi_i^{c_i}= \prod_{i=1}^n x_i^{a_i\sum_{j=1}^nc_j-c_i}\eta^{\sum_{i=1}^nc_i}\varepsilon,
\end{equation}
where $\varepsilon \in \widetilde{\mathcal{O}}$ and $\varepsilon(0)=0$.
Hence it is enough to show that there are positive rational numbers $c_1, \ldots, c_n$, such that
\begin{equation}\label{system_c}
a_k\sum_{j=1}^n c_j=c_k, ~~~ 1\leq k \leq n.
\end{equation}
We choose an arbitrary positive integer $c_1$. Let $1 < s \leq n$. If the $c_i$ are defined for $i<s$, set
\begin{equation}\label{c_s}
c_s=\frac{a_s}{a_{s-1}}c_{s-1}.
\end{equation}
Let us show that $c_1,\ldots,c_n$ verify $(\ref{system_c})$. We will proceed by induction in $k$. First let us show that  $(\ref{system_c})_n$ holds.

\noindent
Let $j<n-1$. By (\ref{c_s}),
\begin{equation}\label{c_n_and_c_j}
c_{n-1}=\frac{a_{n-1}}{a_{n-2}}\frac{a_{n-2}}{a_{n-3}}\cdots \frac{a_{j+1}}{a_{j}}c_j=\frac{a_{n-1}}{a_j}c_j.
\end{equation}
By ($\ref{c_s}$), and since $\sum_{j=1}^n a_j=1$,
\begin{equation*}
c_n=\frac{a_n}{a_{n-1}}c_{n-1}=\frac{c_{n-1}}{a_{n-1}}(1-\sum_{j=1}^{n-1}a_j)=\frac{c_{n-1}}{a_{n-1}}-\sum_{j=1}^{n-1}\frac{a_j}{a_{n-1}}c_{n-1}.
\end{equation*}
Hence, by (\ref{c_n_and_c_j})
\begin{equation*}
c_n=\frac{c_{n-1}}{a_{n-1}}-\sum_{j=1}^{n-1}c_j.
\end{equation*}
Therefore,  $\sum_{j=1}^n c_j=c_{n-1}/a_{n-1}$.
Hence by (\ref{c_s}),
\begin{equation*}
a_n \sum_{j=1}^n c_j=a_n \frac{c_{n-1}}{a_{n-1}}=c_n.
\end{equation*}
Therefore $(\ref{system_c})_n$ holds.

\noindent
Assume $(\ref{system_c})_k$ holds, for $2 \leq k \leq n$. Then
$$a_k\sum_{j=1}^nc_j=c_k=\frac{a_k}{a_{k-1}}c_{k-1}.$$
Hence, $a_{k-1}\sum_{j=1}^nc_j=c_{k-1}$.

\noindent Case 3: 
We can assume that $I=\{1, \ldots, n\}$, where $1 \leq n \leq m_k$. 
Given positive integers $c_1, \ldots, c_n$, it follows from (\ref{dy_dx}) that
\begin{equation*}
\prod_{\i=1}^{n} \xi_{ki}^{c_i}= \left( \prod_{i=1}^{n} x_{ki}^{a_{kki}(\sum_{j=1}^{n}c_j)-c_i} \right) \eta^{\sum_{i=1}^n c_i}\varepsilon,
\end{equation*}
where $\varepsilon \in \tilde{\mathcal{O}}$ and $\varepsilon(0)=0$.
We have reduced the problem to the case 2.
\end{proof}

\begin{theorem}\label{teo_contained_cone}
If $\sum_{k=1}^{m_1}a_{11j}=1$, $\Lambda$ is contained in a cone.
\end{theorem}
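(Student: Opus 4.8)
The plan is to produce one homogeneous polynomial that vanishes on $\Lambda$ and involves only the coordinates $\xi_{11},\ldots,\xi_{1m_1},\eta$; since it omits all the remaining coordinates, its zero locus is a cone containing $\Lambda$.

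Write $a_j=a_{11j}$, $x_j=x_{1j}$, $\xi_j=\xi_{1j}$ and $p_j=p_{1j}$, so that $M_1=\prod_{j=1}^{m_1}x_j^{a_j}$ and $\sum_{j=1}^{m_1}a_j=1$. First I would fix a positive integer $d$ with $c_j:=da_j\in\mathbb{Z}$ for all $j$ and set $S=\sum_{j=1}^{m_1}c_j=d$. These $c_j$ solve the system (\ref{system_c}), since $a_k\sum_j c_j=a_kd=c_k$, exactly as in Case 2 of Theorem \ref{teoprodnull}. Using (\ref{dy_dx}) as in formulas (\ref{prod_of_p})--(\ref{phi_null_explicit}), I would compute
\begin{equation*}
\prod_{j=1}^{m_1} p_{j}^{c_j}=\Big(\prod_{j=1}^{m_1} x_j^{\,a_jS-c_j}\Big)\,\phi=\phi ,
\end{equation*}
where $\phi$ is a unit of $\tilde{\mathcal{O}}$, the last equality holding because every exponent $a_jS-c_j$ vanishes. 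The decisive difference with Case 2 is that the index set here is the whole of $\{1,\ldots,m_1\}$, so no surviving factor $x_j^{\,a_jS}$ remains; by (\ref{very_special_param}) and (\ref{dy_dx}) this forces $\phi(0)=f_1(0)^{S}\prod_j a_j^{c_j}\neq 0$, in contrast with the vanishing obtained in Case 2.

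Passing to homogeneous coordinates through $p_j=-\xi_j/\eta$, the identity reads $\prod_j\xi_j^{c_j}=(-1)^S\phi\,\eta^{S}$ on $\Gamma_0$, and since $\sum_j c_j=S$ its two sides are homogeneous of degree $S$. Put $F:=\prod_{j=1}^{m_1}\xi_j^{c_j}-(-1)^S\phi(0)\,\eta^{S}$. On $\Gamma_0$, and therefore on $\Gamma$ by continuity, one has $F=(-1)^S(\phi-\phi(0))\,\eta^{S}$; as $\phi-\phi(0)$ vanishes on $\pi^{-1}(0)$, the polynomial $F$ vanishes on $\Lambda=\Gamma\cap\pi^{-1}(0)$. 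Thus $\Lambda\subset\{F=0\}$.

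To conclude I would observe that $F$ involves only $\xi_{11},\ldots,\xi_{1m_1},\eta$ and none of the coordinates $\xi_{ij}$ with $i\geq 2$. Hence $\{F=0\}$ is the cone of $\mathbb{P}_n$ over the hypersurface of $\mathbb{P}^{m_1}$ cut out by $F$ in the variables $\xi_{11},\ldots,\xi_{1m_1},\eta$, with vertex the coordinate subspace $\{\xi_{11}=\cdots=\xi_{1m_1}=\eta=0\}$; this is a cone containing $\Lambda$. The only delicate point is the nonvanishing $\phi(0)\neq0$: it is exactly the equality $\sum_{j=1}^{m_1}a_j=1$, rather than the strict inequality of Case 2, that makes the full system (\ref{system_c}) consistent and keeps the limit away from $0$, upgrading the product-null statement ``$\prod_j\xi_j=0$'' into a genuine homogeneous equation that carves out a cone.
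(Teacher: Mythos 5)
Your proposal is correct and follows essentially the same route as the paper: you pick the unique (up to scale) tuple $c_j\propto a_j$ solving $a_k\sum_j c_j=c_k$, so that the monomial factor in $\prod_j\xi_j^{c_j}=(-1)^S\phi\prod_j x_j^{a_jS-c_j}\eta^S$ disappears, and conclude that $\Lambda$ lies in the cone $\prod_j\xi_j^{c_j}=(-1)^S\phi(0)\eta^S$, which is exactly equation (\ref{cone}). Your write-up is somewhat more explicit than the paper's (in justifying $\phi(0)\neq0$, the passage to the limit via $\phi-\phi(0)$, and why the resulting hypersurface is a cone), but the underlying argument is the same.
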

\begin{proof}
Set $a_i=a_{11i}, i=1,\ldots m_1$. Given positive integers $c_1, \ldots, c_{m_1}$, there is a unit $\phi$ of $\tilde{\mathcal{O}}$ such that 
\begin{equation}\label{prod_of_p_cone}
\prod_{i=1}^{m_1} \xi_i^{c_i}=(-1)^{\sum_{j=1}^{m_1}c_j}\phi\prod_{i=1}^{m_1} x_i^{\sum_{j=1}^{m_1} c_ja_i-c_i} \eta^{\sum_{j=1}^{m_1} c_j} .
\end{equation}
By the proof of case 2 of Theorem \ref{teoprodnull}, there is one and only one $m_1$-tuple of integers $c_1, \ldots, c_{m_1}$ such that $(c_1,\ldots,c_{m_1})=(1)$, $a_i\sum_{j=1}^{m_1}c_j=c_i, 1 \leq i \leq m_1$, and $\Lambda$ is contained in the cone defined by the equation
\begin{equation}\label{cone}
\prod_{i=1}^{m_1} \xi_i^{c_i}-(-1)^{\sum_{j=1}^{m_1}c_j}\phi(0)\eta^{\sum_{j=1}^{m_1} c_j}=0,
\end{equation}
where $\phi(0)$ is given by $(\ref{phi_null_explicit})$.
\end{proof}
\begin{remark}\label{remvaluation}
Set $D_{\varepsilon}^*=\{x \in \mathbb{C}:0<|x|<\varepsilon\}$, where $0<\varepsilon<<1$. Set $\mu=\sum_{k=1}^{g+1}m_k$. Let $\sigma:\mathbb{C}\rightarrow \mathbb{C}^{\mu}$ be a weighted homogeneous curve parametrized by
\begin{equation*}
\sigma(t)=(\varepsilon_{ki}t^{\alpha_{ki}})_{1 \leq k \leq g+1, 1 \leq i \leq m_k}.
\end{equation*}
Notice that the image of $\sigma$ is contained in $\mathbb{C}^{\mu}\setminus \Delta$. Set $\theta_0(t)=1$ and
\begin{equation*}
\theta_{ki}(t)=\frac{\partial \varphi}{\partial x_{ki}}(\sigma (t), \varphi(\sigma(t))), ~~~1 \leq k \leq g+1, 1 \leq i \leq m_k,
\end{equation*}
for $t \in D_{\varepsilon}^*$. The curve $\sigma$ induces a map from $D_{\varepsilon}^*$ into $\Gamma$ defined by
\begin{equation*}
t \mapsto (\sigma(t), \varphi(\sigma(t));\theta_{11}(t):\cdots:\theta_{g+1,m_g+1}(t):\theta_0(t)).
\end{equation*}
Let $\vartheta:D_{\varepsilon}^*\rightarrow\mathbb{P}^{\mu}$ be the map defined by
\begin{equation}
t \mapsto (\theta_{11}(t):\cdots :\theta_{g+1,m_g+1}(t):\theta_0(t)).
\end{equation}
The limit when $t \to 0$ of $\vartheta(t)$ belongs to $\Lambda$.
The functions $\theta_{ki}$ are ramified Laurent series of finite type on the variable t. Let $h$ a be ramified Laurent series of finite type. If $h=0$, we set $v(h)=\infty$. If $h \neq 0$, we set $v(h)=\alpha$, where $\alpha$ is the only rational number such that $\displaystyle \lim_{t \to 0}t^{-\alpha}h(t)\in \mathbb{C}\setminus\{0\}$. We call $\alpha$ the \em valuation \em of $h$.
Notice that the limit of $\vartheta$ only depends on the functions $\theta_{ki}, \theta_0$ of minimal valuation. Moreover, the limit of $\vartheta$ only depends on the coefficients of the term of minimal valuation of each $\theta_{ij}, \theta_0$. Hence the limit of $\vartheta$ only depends on the coefficients of the very special monomials of $f$. We can assume that $m_{g+1}=0$ and that there are $\lambda_k \in \mathbb{C}\setminus \{0\}, 1\leq k \leq g$, such that 
\begin{equation}
\varphi=\sum_{k=1}^g\lambda_k M_k.
\end{equation}
\end{remark}
\begin{remark}
Let $L$ be a finite set. Set $\mathbb{C}^L=\{(x_a)_{a \in L}: x_a \in \mathbb{C}\}.$ Let $\sum_{a \in L}\xi_a dx_a$ be the canonical 1-form of $T^{*}\mathbb{C}^{L}$. 
Let $\Lambda$ be the subset of $\mathbb{P}_{L}$ defined by the equations
\begin{equation}\label{prod_xi_null}
\prod_{a \in I}\xi_a=0, ~~~I \in \mathcal{I},
\end{equation} 
where $\mathcal{I}\subset \mathcal{P}(L)$. Set $\mathcal{I}^{\prime}=\{J\subset L:J\cap I \neq \emptyset$  for all  $I \in \mathcal{I}\}$, $\mathcal{I}^{*}=\{J\in \mathcal{I}^{\prime}$ such that there is no $K \in \mathcal{I}^{\prime}: K \subset J, K \neq J\}$.
The irreducible components of $\Lambda$ are the linear projective sets $\Lambda_J, J \in \mathcal{I}^{*}$, where $\Lambda_J$ is defined by the equations
\begin{equation*}
\xi_a=0, \qquad a \in J.
\end{equation*}
\end{remark}
\noindent
Let $Y$ be a germ of hypersurface of $(\mathbb{C}^{L},0)$. Let $\Lambda$ be the set of limits of tangents of $Y$. For each irreducible component $\Lambda_J$ of $\Lambda$ there is a cone $V_J$ contained in the tangent cone of $Y$ such that $\Lambda_J$ is the dual of the projectivization of $V_J$. The union of the cones $V_J$ is called the \em halo \em of $Y$. The halo of $Y$ is called "la aur\'eole" of $Y$ in \cite{LeTe1988}. 
\begin{remark}
If $\Lambda$ is defined by the equations (\ref{prod_xi_null}),
the halo of $Y$ equals the union of the linear subsets $V_J, J \in \mathcal{I}^{*}$ of $\mathbb{C}^{\ L}$ , where $V_J$ is defined by the equations
\begin{equation*}
x_a=0, \qquad a \in L \setminus J.
\end{equation*}
\end{remark}
\begin{lemma}\label{lemma_det}
The determinant of the $n \times n$ matrix $(\lambda_i-\delta_{ij})$ equals $$(-1)^n(1-\sum_{i=1}^n \lambda_i).$$ 
\end{lemma}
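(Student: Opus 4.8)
The plan is to recognize the matrix $M=(\lambda_i-\delta_{ij})$ as a rank-one perturbation of minus the identity and to compute its determinant by exploiting the multilinearity of $\det$ in the columns. Writing $\lambda=(\lambda_1,\ldots,\lambda_n)^T$ and letting $e_1,\ldots,e_n$ denote the standard basis of $\mathbb{C}^n$, the $j$-th column of $M$ is exactly $\lambda-e_j$, since its entry in row $i$ is $\lambda_i-\delta_{ij}$. Thus $\det M=\det(\lambda-e_1,\ldots,\lambda-e_n)$, which exhibits $M=\lambda\mathbf{1}^T-I$ with $\mathbf{1}=(1,\ldots,1)^T$.

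First I would expand this determinant using linearity in each column, writing each column as the difference $\lambda-e_j$. This produces $2^n$ terms, each obtained by choosing, in every column, either the summand $\lambda$ or the summand $-e_j$. The key observation is that any term in which the vector $\lambda$ is selected in two or more columns has two equal columns and hence vanishes. Only two families of terms survive: the single term in which $-e_j$ is chosen in every column, and the $n$ terms in which $\lambda$ is chosen in exactly one column.

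Next I would evaluate the surviving terms. The all-$(-e_j)$ term is $\det(-e_1,\ldots,-e_n)=(-1)^n\det(I)=(-1)^n$. For the term in which $\lambda$ sits in column $k$ and $-e_j$ occupies the remaining $n-1$ columns, factoring the $-1$ out of those $n-1$ columns yields $(-1)^{n-1}\det(e_1,\ldots,e_{k-1},\lambda,e_{k+1},\ldots,e_n)$; expanding the latter determinant along column $k$ (or recognizing it as the identity with its $k$-th column replaced by $\lambda$) gives $\lambda_k$. Summing over $k$ contributes $(-1)^{n-1}\sum_{k=1}^n\lambda_k$. Adding the two contributions gives $(-1)^n-(-1)^n\sum_{k=1}^n\lambda_k=(-1)^n(1-\sum_{i=1}^n\lambda_i)$, as claimed.

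There is no serious obstacle here; the statement is a standard rank-one determinant identity, and one could alternatively invoke the matrix determinant lemma $\det(I-uv^T)=1-v^Tu$ with $u=\lambda$ and $v=\mathbf{1}$, after factoring $M=-(I-\lambda\mathbf{1}^T)$, or argue by induction on $n$ via cofactor expansion. The only point demanding care is the bookkeeping of signs when the factors of $-1$ are pulled out of the columns and each resulting minor is identified.
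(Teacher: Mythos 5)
Your proof is correct, but it takes a different route from the paper's. The paper computes the determinant by elementary row/column operations: it first reduces the matrix to one with $-I_{n-1}$ in the upper-left block, a column of $1$'s on the right, and $(\lambda_1,\dots,\lambda_{n-1},\lambda_n-1)$ in the last row, and then clears the last row by adding multiples of the other rows, leaving a single corner entry $\sum_{i=1}^n\lambda_i-1$; the stated value then falls out of the triangular block structure. You instead exploit the rank-one structure $M=\lambda\mathbf{1}^T-I$ directly, expanding $\det(\lambda-e_1,\dots,\lambda-e_n)$ by multilinearity and observing that all terms with $\lambda$ chosen twice vanish; the bookkeeping of signs in the two surviving families of terms is done correctly. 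The two arguments are of comparable length; the paper's elimination is perhaps the quicker thing to write down and check by hand, while your multilinearity argument makes the mechanism transparent and generalizes verbatim to the matrix determinant lemma $\det(I-uv^T)=1-v^Tu$, which you note. Either proof is acceptable here.
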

\begin{proof}
\noindent
Notice that $\det(\lambda_i-\delta_{ij})=$
\begin{equation*}
=       \left|\begin{array}{ccc|c}
                                   & & &1 \\ \
                                    & -I_{n-1} & & \vdots \\
                                    & & & 1\\ \hline
                                   \lambda_1 & \cdots & \lambda_{n-1}& \lambda_n -1\\
            \end{array}\right|=
          \left|\begin{array}{ccc|c}
                                   & & &1 \\ \
                                    & -I_{n-1} & & \vdots \\
                                    & & & 1\\ \hline
                                    0 & \cdots & 0 & \sum_{i=1}^n \lambda_i -1\\
            \end{array}\right|.
\end{equation*}
\end{proof}
\begin{theorem}\label{case_l_1}
Assume that $\sum_{i=1}^{m_1} a_{11i}<1$. Set 
$$L= \cup_{k=2}^{g}\{k\}\times\{1,\ldots,m_k\}, ~~~ \mathcal{I}=\cup_{k=2}^g\{\{k\}\times I: \sum_{j \in I}a_{kkj}\geq 1\}.$$
The set $\Lambda$ is the union of the irreducible linear projective sets $\Lambda_J, J \in \mathcal{I}^{*}$, defined by the equations $\eta=0$ and 
\begin{equation}\label{xinull}
\xi_{kj}=0, ~~~(k,j)\in J.
\end{equation}
\noindent
The tangent cone of $Y$ equals $\{x_{11} \cdots x_{1 {m_1}}=0\}$. The halo of $Y$ is the union of the cones $V_J$, $J \in \mathcal{I}^*$, where $V_J$ is defined by the equations $x_{1j}=0$, $1 \leq j \leq m_1$, and 
\begin{equation}\label{xkjnull}
x_{kj}=0, (k,j) \in L \setminus J.
\end{equation} 
\end{theorem}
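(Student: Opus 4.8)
The plan is to prove the two inclusions $\Lambda\subseteq\bigcup_{J\in\mathcal I^*}\Lambda_J$ and $\bigcup_{J\in\mathcal I^*}\Lambda_J\subseteq\Lambda$ separately, and then read off the tangent cone and the halo. For the first inclusion I would argue as follows. Since $\sum_{i=1}^{m_1}a_{11i}<1$, Theorem \ref{teo_less_than_one} gives $\Lambda\subseteq\{\eta=0\}$. Next, every element of $\mathcal I$ has the form $K=\{k\}\times I$ with $2\le k\le g$ and $\sum_{j\in I}a_{kkj}\ge 1$; for such a $K$ Theorem \ref{teoprodnull} applies, through case (1) when $\sum_{j\in I}a_{kkj}>1$ and through case (3) when $\sum_{j\in I}a_{kkj}=1$ (here $k\ge2$), and in either situation $\Lambda\subseteq\{\prod_{(k,j)\in K}\xi_{kj}=0\}$. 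Hence $\Lambda$ lies in the subset of $\{\eta=0\}$ cut out by the equations $\prod_{a\in K}\xi_a=0$, $K\in\mathcal I$, and by the Remark identifying the irreducible components of such a variety this subset is exactly $\bigcup_{J\in\mathcal I^*}\Lambda_J$.

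The reverse inclusion is the heart of the matter, and I would establish it one component at a time: fixing $J\in\mathcal I^*$, it suffices, since $\Lambda$ is closed and $\Lambda_J$ irreducible, to realize a Zariski-dense subset of $\Lambda_J$ as limits of tangents. By Remark \ref{remvaluation} I may assume $\varphi=\sum_{k=1}^g\lambda_kM_k$ with all $\lambda_k\neq0$ and probe $\Lambda$ with weighted homogeneous curves $\sigma(t)=(\varepsilon_{ki}t^{\alpha_{ki}})$, for which each $\theta_{ki}(t)$ is an explicit sum of monomials $t^{w_l-\alpha_{ki}}$, where $w_l$ is the $\alpha$-weighted degree of $M_l$. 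The decisive point is the combinatorial reading of $\mathcal I^*$: because the heavy subsets in distinct blocks are disjoint, $J$ decomposes as $J=\bigsqcup_{k=2}^{g}J_k$ where each $J_k$ is a minimal transversal of the heavy subsets of block $k$, equivalently the complement $B_k=(\{k\}\times\{1,\dots,m_k\})\setminus J_k$ is a maximal \emph{light} set, $\sum_{i\in B_k}a_{kki}<1$, while block $1$ is entirely light by hypothesis. I would then choose the weights $\alpha_{ki}$ so that within each light set $B_k$ (with $B_1$ all of block $1$) the coordinates $\xi_{ki}$ dominate exactly as in the construction proving Theorem \ref{teo_less_than_one}, scaling the blocks against one another so that all these dominant coordinates acquire a common minimal and negative valuation, the coordinates $\xi_{kj}$ with $(k,j)\in J$ acquire strictly larger valuation, and $\theta_0\equiv1$ sits strictly above the minimum; the limit of $\vartheta$ then lands on $\Lambda_J$.

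The main obstacle I anticipate is the final step: showing that, as the coefficients $\varepsilon_{ki}$ vary, the leading coefficients of the dominant $\theta_{ki}$ sweep out a dense subset of $\Lambda_J$ rather than a proper subvariety. I would compute these leading coefficients explicitly and prove that the induced map from the $\varepsilon$'s to $\Lambda_J$ is dominant by checking that an appropriate Jacobian minor does not vanish; this is precisely where Lemma \ref{lemma_det} should enter, the minor being a determinant of the form $\det(\lambda_i-\delta_{ij})$, which is nonzero exactly because the relevant block sum differs from $1$ (for block $1$ because $\sum_i a_{11i}<1$, and for $B_k$ because $\sum_{i\in B_k}a_{kki}<1$). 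The coupling of the blocks through the shared factors in each coefficient $C_l=\prod\varepsilon^{a_{l\,\cdot}}$ makes this computation delicate, and I expect most of the work to lie here.

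Finally I would compute the tangent cone and deduce the halo. Writing a Weierstrass equation $f=\prod_\omega\bigl(y-\varphi^{(\omega)}\bigr)$ over the conjugate branches, the fact that $\deg M_1=\sum_i a_{11i}$ is strictly smaller than $1=\deg y$ and than every $\deg M_k$ with $k\ge2$ shows that the initial form of $f$ is the norm $\prod_\omega(-\lambda_1 M_1^{(\omega)})$, whose radical is $\{x_{11}\cdots x_{1m_1}=0\}$; this is the tangent cone. The halo then follows formally from the duality between $\Lambda$ and its halo recorded in the Remarks: each $\Lambda_J=\{\eta=0,\ \xi_{kj}=0\ \text{for}\ (k,j)\in J\}$ is dual to the linear cone $V_J=\{x_{1j}=0\ \text{for all}\ j,\ x_{kj}=0\ \text{for}\ (k,j)\in L\setminus J\}$, which lies in the tangent cone since $x_{1j}=0$ on $V_J$; taking the union over $J\in\mathcal I^*$ yields the stated halo.
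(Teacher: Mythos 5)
Your proposal is correct and follows essentially the same route as the paper: the inclusion $\Lambda\subseteq\bigcup_J\Lambda_J$ from Theorems \ref{teo_less_than_one} and \ref{teoprodnull} together with the remark on irreducible components, and the reverse inclusion by probing with weighted homogeneous curves as in Remark \ref{remvaluation}, tuning the weights block by block so the surviving coordinates share a common minimal negative valuation, and verifying dominance of $\psi$ via Lemma \ref{lemma_det}. The coupling between blocks that you flag as the delicate point is resolved in the paper by observing that $\partial\psi_{ki}/\partial\varepsilon_{uj}=0$ for $u>k$, so the Jacobian is block upper triangular and its determinant is the product of the block determinants $\det(D_k)$, each nonzero because $\sum_{i}a_{kki}<1$ on the surviving (light) part of block $k$.
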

\begin{proof}
Let us show that $\Lambda_J\subset \Lambda$.
We can assume that there are integers $n_1,\ldots,n_g$, $1 \leq n_k \leq m_k$, $1 \leq k \leq g$, such that $J=\cup_{k=1}^g \{k\}\times\{n_k+1,\ldots,m_k\}$.
We will use the notations of Remark $\ref{remvaluation}$.

\noindent 
Set $m=\sum_{k=1}^gm_k, n=m-\#J$. Assume that there are positive rational numbers $\alpha_k, \beta_k, 1 \leq k \leq g$, such that 
$\alpha_{ki}=\alpha_k$ if $1 \leq i \leq n_k$, $\alpha_{ki}=\beta_k$ if $n_{k}+1 \leq i \leq m_k$, and $\alpha_k>\beta_k$, $1 \leq k \leq g$. Since $v(\theta_{ki})=v(M_k)-v(x_{ki})=v(M_k)-\alpha_{ki}$, 
\begin{equation*}
\lim_{t \to 0}\vartheta (t) \in \Lambda_J.
\end{equation*}
Let $\psi:(\mathbb{C}\setminus\{0\})^n \rightarrow \Lambda_J$ be the map defined by
\begin{equation}\label{psi}
\psi(\varepsilon_{ij})=\lim_{t \to 0} \vartheta (t).
\end{equation}
The map $\psi$ has components $\psi_{ki}$, $1 \leq i \leq n_k, 1 \leq k \leq g$. In order to prove the Theorem it is enough to show that we can choose the rational numbers $\alpha_{k}, \beta_k$ in such a way that the Jacobian of $\psi$ does not vanish identically. 
\noindent
We will proceed by induction in $k$. Let $k=1$. Since $\sum_{i=1}^{m_1}a_{11i}<1$, $n_1=m_1$. Choose positive rationals $\alpha_1,\beta_1$, $\alpha_1>\beta_1$. There is a rational number $v_0<0$ such that $v(\theta_{1i})=v_0$, for all $1 \leq i \leq n_1$.

\noindent
Assume that there are $\alpha_{k}, \beta_k$ such that $v(\theta_{ki})=v_0$ for $1\leq i \leq n_k$ and $v(\theta_{ki})>v_0$ for $n_k+1 \leq i \leq m_k$,  $k=1,\ldots, u$. 
Set 
\begin{equation*}
\underline{\alpha}_{u+1}=\dfrac{\alpha_u+\sum_{k=1}^u\sum_{i=1}^{m_k}(a_{u+1,k,i}-a_{uki})\alpha_{ki}}{1-\sum_{i=1}^{n_{u+1}}a_{u+1,u+1,i}}.
\end{equation*}
Since the special monomials are ordered by valuation and, by construction of $\Lambda_J$, $\sum_{i=1}^{n_k}a_{kki}<1$ for all $1 \leq k \leq g$, $\underline{\alpha}_{u+1}$ is a positive rational number. Choose  a rational number $\beta_{u+1}$ such that $0<\beta_{u+1}<\underline{\alpha}_{u+1}$. Set   
\begin{equation*}
\alpha_{u+1}=\underline{\alpha}_{u+1}+\frac{\sum_{i=n_{u+1}+1}^{m_{u+1}}a_{u+1,u+1,i}\beta_{u+1}}{1-\sum_{i=1}^{n_{u+1}}a_{u+1,u+1,i}}.
\end{equation*}
Then, $v(\theta_{u+1,i})=v(M_{u+1})-\alpha_{u+1}=v(M_{u})-\alpha_{u}=v_0$ for $1 \leq i \leq n_{u+1}$.

\noindent
Set $\widehat M_{k}=\prod_{i=1}^k \prod_{j=1}^{m_k} \varepsilon_{ij}^{a_{kij}}, 1 \leq i \leq n_k, 1 \leq k \leq g$.
With these choices of $\alpha_{ki}$, we have that
\begin{equation*}
\psi_{ki}=\frac{\widehat M_{k}a_{kki}}{\varepsilon_{ki}}, ~~~  1 \leq i \leq n_k, 1 \leq k \leq g.
\end{equation*}
\noindent
Let $D$ be the jacobian matrix of $\psi$. Since $\partial \psi_{ki}/\partial \varepsilon_{uj}=0$ for all $u>k$, $D$ is upper triangular by blocks. Let $D_k$ be the k-th diagonal block of $D$, $1 \leq k \leq g$. We have that 

\begin{equation*}
D_k=\left( \dfrac{\widehat M_k}{\varepsilon_{ki}\varepsilon_{kj}}a_{kki}(a_{kkj}-\delta_{ij}) \right).
\end{equation*}
By Lemma \ref{lemma_det}, $\det(D_k)=\lambda(1-\sum_{i=1}^{m_k}a_{kki})$ for some $\lambda \in \mathbb{C}\setminus\{0\}$. Hence $\Lambda$ contains an open set of $\Lambda_J$. Since $\Lambda$ is a projective variety and $\Lambda_J$ is irreducible, $\Lambda$ contains $\Lambda_J$.
\end{proof}

\begin{theorem}\label{case_g_1}
Assume that $\sum_{i=1}^{m_1} a_{11i}>1$. Set 
$$L=\cup_{k=1}^{g}\{k\}\times\{1,\ldots,m_k\}, ~~~ \mathcal{I}=\cup_{k=1}^g\{\{k\}\times I: \sum_{j \in I}a_{kkj}\geq 1\}.$$
The set $\Lambda$ is the union of the irreducible linear projective sets $\Lambda_J, J \in \mathcal{I}^{*}$, defined by the equations $(\ref{xinull})$.

\noindent
The tangent cone of $Y$ equals $\{y=0\}$. The halo of $Y$ is the union of the cones $V_J$, $J \in \mathcal{I}^*$, where $V_J$ is defined by the equations $y=0$ and $(\ref{xkjnull})$.
\end{theorem}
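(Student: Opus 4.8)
The plan is to prove the two inclusions $\Lambda\subset\bigcup_{J\in\mathcal I^*}\Lambda_J$ and $\Lambda_J\subset\Lambda$ for every $J\in\mathcal I^*$, following the strategy of Theorem \ref{case_l_1} but now treating the block $k=1$ on exactly the same footing as the blocks $k\ge 2$. The hypothesis $\sum_{i=1}^{m_1}a_{11i}>1$ is precisely what will force the coordinate $\eta$ to survive in the limit (so that $\{\eta=0\}$ no longer appears among the equations of $\Lambda_J$) and what will make the tangent cone be $\{y=0\}$.

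For the inclusion $\Lambda\subset\bigcup_J\Lambda_J$ I would invoke Theorem \ref{teoprodnull}. For each generator $\{k\}\times I\in\mathcal I$, that is, each $I$ with $\sum_{j\in I}a_{kkj}\ge 1$, one obtains $\Lambda\subset\{\prod_{j\in I}\xi_{kj}=0\}$: when the sum is $>1$ this is case 1; when it equals $1$ and $k\ge 2$ it is case 3; and when it equals $1$ and $k=1$ it is case 2, which applies here exactly because $\sum_{j=1}^{m_1}a_{11j}>1$. Thus $\Lambda$ lies in the common zero set of all the monomials $\prod_{a\in I}\xi_a$, $I\in\mathcal I$, and the combinatorial description of such a set (the Remark on $\mathbb C^L$) identifies this zero set, in the coordinates $(\xi,\eta)$, with $\bigcup_{J\in\mathcal I^*}\Lambda_J$.

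The reverse inclusion $\Lambda_J\subset\Lambda$ is the substantial part, and I would carry it out as in Theorem \ref{case_l_1} using the curves $\sigma$ of Remark \ref{remvaluation}. Writing $J=\bigcup_{k=1}^g\{k\}\times\{n_k+1,\dots,m_k\}$, minimality of $J$ gives $\sum_{i=1}^{n_k}a_{kki}<1$, and I assign weights $\alpha_{ki}=\alpha_k$ for $i\le n_k$ and $\alpha_{ki}=\beta_k$ for $i>n_k$ with $\alpha_k>\beta_k$, so that $v(\theta_{ki})=v(M_k)-\alpha_{ki}$ and the vanishing indices acquire strictly larger valuation. The one genuine change is the base case $k=1$: I must choose $\alpha_1,\beta_1$ so that the common surviving valuation $v_0$ equals $0$ rather than being negative, which is what makes $\theta_0$ survive and yields $\eta\ne 0$ in the limit. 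Imposing $v_0=v(M_1)-\alpha_1=0$ amounts to $\alpha_1(1-\sum_{i\le n_1}a_{11i})=\beta_1\sum_{i>n_1}a_{11i}$, and the required $\alpha_1>\beta_1>0$ is solvable precisely because $\sum_{i=1}^{m_1}a_{11i}>1$. The inductive step defining $\alpha_{u+1},\beta_{u+1}$ is then verbatim that of Theorem \ref{case_l_1} (its numerator is positive since $M_u$ divides $M_{u+1}$, and its denominator $1-\sum_{i\le n_{u+1}}a_{u+1,u+1,i}$ is positive), and it maintains $v(\theta_{ki})=0$ on all surviving indices. The limit map $\psi$ of (\ref{psi}) then has the same components $\psi_{ki}=\widehat M_k a_{kki}/\varepsilon_{ki}$, the only difference being that the limit point now carries a nonzero last coordinate $\eta$. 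Its Jacobian is upper triangular by blocks, with diagonal blocks $D_k$ whose determinants, by Lemma \ref{lemma_det}, are nonzero multiples of $1-\sum_{i\le n_k}a_{kki}\ne 0$; hence $\psi$ is dominant and $\Lambda_J\subset\Lambda$.

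Finally, for the tangent cone and halo: since $M_1$ has least order among the very special monomials and $\operatorname{ord}(M_1)=\sum_i a_{11i}>1$, the series $\varphi$ and all its conjugates $\varphi^{(\zeta)}$ have order $>1$, so the initial form of $f=\prod_\zeta(y-\varphi^{(\zeta)})$ is $y^N$ and the tangent cone is $\{y=0\}$. The halo is then read off by projective duality: each $\Lambda_J=\{\xi_{kj}=0:(k,j)\in J\}$ is the annihilator of the coordinate subspace $V_J=\{x_{kj}=0:(k,j)\in L\setminus J\}\cap\{y=0\}$, which is contained in the tangent cone, giving the stated equations. The main obstacle — and the only place where the argument truly diverges from Theorem \ref{case_l_1} — is the base case $k=1$: arranging $v_0=0$ together with $\alpha_1>\beta_1$ is exactly where the hypothesis $\sum_i a_{11i}>1$ is used, and it is what replaces the equation $\eta=0$ of the previous theorem by the surviving $\eta$ coordinate here.
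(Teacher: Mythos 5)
Your proposal is correct and follows essentially the same route as the paper: the paper's proof of Theorem \ref{case_g_1} simply repeats the argument of Theorem \ref{case_l_1} with the single modification that on the first induction step one sets $\beta_1=\bigl(\frac{1-\sum_{i=1}^{n_1}a_{11i}}{\sum_{i=n_1+1}^{m_1}a_{11i}}\bigr)\alpha_1$, which is exactly your condition $v_0=v(M_1)-\alpha_1=0$ and uses the hypothesis $\sum_{i=1}^{m_1}a_{11i}>1$ to guarantee $\beta_1<\alpha_1$. The additional details you supply (the inclusion $\Lambda\subset\bigcup_J\Lambda_J$ via Theorem \ref{teoprodnull}, and the tangent cone and halo computations) are consistent with what the paper leaves implicit.
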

\begin{proof}
The proof is analogous to the proof of Theorem \ref{case_l_1}.
On the first induction step we choose \[\beta_1=\left( \dfrac{1-\sum_{i=1}^{n_1}a_{11i}}{\sum_{i=n_1+1}^{m_1}a_{11i}}\right ) \alpha_1 .\]
Hence $\beta_1<\alpha_1$, $ v(\theta_{1i})=v(\eta)=0$ for $1 \leq i \leq n_1$ and $v(\theta_{1i})>0$ for $n_1+1 \leq i \leq m_1$. The rest of the proof proceeds as in the previous case.
\noindent
\end{proof}
\begin{theorem}\label{case_eq_1}
Assume that $\sum_{i=1}^{m_1} a_{11i}=1$. Set 
$$L=\cup_{k=2}^{g}\{k\}\times\{1,\ldots,m_k\}, ~~~ \mathcal{I}=\cup_{k=2}^g\{\{k\}\times I: \sum_{j \in I}a_{kkj}\geq 1\}.$$
 The set $\Lambda$ is the union of the irreducible projective algebraic sets $\Lambda_J, J \in \mathcal{I}^{*}$, where $\Lambda_J$ is defined by the equations $(\ref{cone})$ and $(\ref{xinull})$. 

\noindent
 There are integers $c, d_i$ such that $a_{11i}=d_i/c, 1 \leq i \leq m_1$ and $c$ is the $l.c.d.$ of $d_1, \ldots, d_{m_1}$.
The tangent cone of $Y$ equals 
\begin{equation}\label{tgcone_case_eq_1}
y^c-f(0)^c \prod_{i=1}^{m_1}x_{1i}^{d_i} =0.
\end{equation}

\noindent
 The halo of $Y$ is the union of the cones $V_J$, $J \in \mathcal{I}^*$, where $V_J$ is defined by the equations $(\ref{xkjnull})$ and $(\ref{tgcone_case_eq_1})$.
\end{theorem}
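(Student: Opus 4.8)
The plan is to reproduce the architecture of the proof of Theorem \ref{case_l_1}, establishing the two inclusions $\Lambda\subset\bigcup_{J\in\mathcal I^*}\Lambda_J$ and $\Lambda_J\subset\Lambda$ in turn and then reading off the tangent cone and the halo. The one genuinely new feature, compared with Theorems \ref{case_l_1} and \ref{case_g_1}, is that the first level no longer annihilates $\eta$: because $\sum_{i=1}^{m_1}a_{11i}=1$, the level-one part of every limit lands on the cone (\ref{cone}) instead of on a coordinate hyperplane, so $\eta$ survives with minimal valuation and the pieces $\Lambda_J$ are genuine cones rather than linear spaces.

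For the inclusion $\Lambda\subset\bigcup_J\Lambda_J$ I would combine three facts. Theorem \ref{teo_contained_cone} gives $\Lambda\subset\{(\ref{cone})\}$. Theorem \ref{teoprodnull} (case 1 for $k\ge 2$, together with case 3) gives $\Lambda\subset\{\prod_{j\in I}\xi_{kj}=0\}$ for every $k\ge 2$ and every $I$ with $\sum_{j\in I}a_{kkj}\ge 1$. By the Remark on the irreducible components of a variety cut out by products of coordinates, these vanishing conditions carve out exactly the loci $\{\xi_{kj}=0:(k,j)\in J\}$ for $J\in\mathcal I^*$; intersecting with the cone yields $\bigcup_J\Lambda_J$.

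For the reverse inclusion I would fix $J\in\mathcal I^*$, write $J=\bigcup_{k=2}^g\{k\}\times\{n_k+1,\dots,m_k\}$, and run the curve construction of Theorem \ref{case_l_1} through the map $\psi$ of (\ref{psi}). Since $k=1\notin\mathcal I$ one has $n_1=m_1$; I would take all the first-level exponents equal to a common $\alpha_1$, so that
\[
v(\theta_{1i})=v(M_1)-\alpha_1=\alpha_1\sum_{i=1}^{m_1}a_{11i}-\alpha_1=0=v(\eta),
\]
i.e. $v_0=0$ and $\psi_0=\theta_0=1\neq 0$. Thus by Theorem \ref{teo_contained_cone} the level-one limit lies on the cone (\ref{cone}), and one may work in the chart $\eta=1$. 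The inductive balancing of $\alpha_k,\beta_k$ for $k\ge 2$ and the resulting $\psi_{ki}=\widehat M_k a_{kki}/\varepsilon_{ki}$ carry over verbatim, and the Jacobian $D$ of $\psi$ is again block upper-triangular with the blocks $D_k$ ($k\ge 2$) invertible exactly as before.

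The step I expect to be the main obstacle is the first diagonal block. By Lemma \ref{lemma_det}, $\det D_1\propto 1-\sum_{i=1}^{m_1}a_{11i}=0$, so $D_1$ is now singular and the dominance argument of Theorem \ref{case_l_1} does not apply directly. The remedy is to compute its rank: up to invertible diagonal scalings $D_1$ equals $(a_{11j}-\delta_{ij})=\mathbf 1\,a^{\top}-I$ with $a=(a_{11i})_i$ and $\mathbf 1=(1,\dots,1)^{\top}$, whose eigenvalues are $\sum_i a_{11i}-1=0$ (simple, along $\mathbf 1$) and $-1$ (with multiplicity $m_1-1$); hence $\operatorname{rank}D_1=m_1-1$. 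This is exactly the dimension of the cone (\ref{cone}), and the one-dimensional kernel is the scaling redundancy of the $\varepsilon_{1i}$. Therefore $\operatorname{rank}D=(m_1-1)+\sum_{k\ge 2}n_k=\dim\Lambda_J$ at a generic point, so $\psi$ dominates $\Lambda_J$; as $\Lambda_J$ is irreducible and $\Lambda$ is closed, $\Lambda_J\subset\Lambda$. Finally, evaluating (\ref{cone}) at the unique normalized solution $c_i=d_i$ of (\ref{system_c}) gives $\prod_i\xi_{1i}^{d_i}=(-1)^{c}\phi(0)\,\eta^{c}$ (with $\sum_i d_i=c$), whose projective dual is the tangent cone (\ref{tgcone_case_eq_1}); the halo is then recovered from the duality in the Remark as the union of the cones $V_J$ cut out by (\ref{xkjnull}) and (\ref{tgcone_case_eq_1}).
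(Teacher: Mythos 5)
Your proposal is correct and follows essentially the same route as the paper: reduce to showing $\Lambda_J\subset\Lambda$, run the curve construction with all first-level exponents equal so that $v(\theta_{1i})=v(\eta)=0$, invoke Theorem \ref{teo_contained_cone} to land on the cone (\ref{cone}), and handle the now-singular block $D_1$ by showing it has rank $m_1-1$. The only (cosmetic) difference is that you get this rank from the eigenvalues of the rank-one perturbation of $-I$, whereas the paper applies Lemma \ref{lemma_det} to the minor of $D_1$ obtained by deleting the $m_1$-th row and column, whose determinant is proportional to $1-\sum_{i=1}^{m_1-1}a_{11i}\neq 0$.
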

\begin{proof}
Following the arguments of Theorem \ref{case_l_1},
 it is enough to show that $\Lambda_J \subset \Lambda$ for each $J \in \mathcal{I}^*$. 
Choose $J \in \mathcal{I}^*$. 
Let $\tilde \Lambda_J$ be the linear projective variety defined by the equations $(\ref{xinull})$.  
We follow an argument analogous to the one used in Theorem \ref{case_l_1}. We have $n_1=m_1$. 
We choose positive rational numbers $\alpha_1, \beta_1$ such that $\beta_1<\alpha_1$. 
Then $v(\theta_{1i})=0$ for all $i=1, \ldots, m_1$. 
The remaining steps of the proof proceed as before. 
Hence
 \begin{equation*}
\lim_{t \to 0}\vartheta (t) \in \tilde \Lambda_J.
\end{equation*}
Let $\psi: (\mathbb{C}\setminus \{0\})^n\rightarrow \tilde \Lambda_J$ be the map defined by (\ref{psi}).
By Theorem \ref{teo_contained_cone} the image of $\psi$ is contained in $\Lambda_J$.
\noindent
By Lemma \ref{lemma_det}, $\det(D_1)=0$. Let ${D^{\prime}}_1 $ be the matrix obtained from $D_1$ by eliminating the $m_1$-th line and column. 
Then $det(D_1^{ \prime})=\lambda^{ \prime} (1-\sum_{i=1}^{m_1-1}a_{kki})$ for some $\lambda^{ \prime} \in \mathbb{C}\setminus\{0\}$. 
Hence, $\Lambda_J\subset \Lambda$. 
\end{proof}

\noindent
Let $Y$ be a quasi-ordinary hypersurface singularity.

\begin{corollary}\label{cor1}
The set of limits of tangents of $Y$ only depends on the tangent cone of $Y$ and the topology of $Y$.
\end{corollary}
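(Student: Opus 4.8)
The plan is to read $\Lambda$ off the three computational theorems \ref{case_l_1}, \ref{case_g_1}, \ref{case_eq_1} and, in each case, to split the description of $\Lambda$ into a purely combinatorial part (controlled by the topology) and the single analytic scalar that can occur (controlled by the tangent cone). The guiding principle is that every exponent entering these theorems is a topological invariant, while the only coefficient that ever appears is the scalar $\phi(0)$ of $(\ref{phi_null_explicit})$, and that this scalar is exactly the datum recorded by the tangent cone.

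First I would invoke the invariance of the special monomials: by \cite{Lipman1988} the topology of $Y$ determines $N_1,\ldots,N_m$, hence the very special monomials $M_k$ and all their exponents $a_{kij}$. Consequently the topology fixes the value of $\sum_{i=1}^{m_1}a_{11i}$ relative to $1$, and so it selects which of the three theorems applies; it also fixes the index sets $L$ and $\mathcal{I}$, since $\mathcal{I}$ is built from the conditions $\sum_{j\in I}a_{kkj}\ge 1$, hence it fixes $\mathcal{I}^{*}$ and all the hyperplane equations $\xi_{kj}=0$, $(k,j)\in J$, as well as $\eta=0$. In the two strict cases this already finishes the argument: by Theorems \ref{case_l_1} and \ref{case_g_1} each component $\Lambda_J$ is the linear space cut out by $(\ref{xinull})$ (together with $\eta=0$ when $\sum_{i=1}^{m_1}a_{11i}<1$), and these equations involve only the exponents, so $\Lambda$ is determined by the topology alone and the tangent cone is not even needed.

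The remaining, and only delicate, case is $\sum_{i=1}^{m_1}a_{11i}=1$. Here Theorem \ref{case_eq_1} cuts out each $\Lambda_J$ by the same linear equations $(\ref{xinull})$ together with the single cone $(\ref{cone})$, every coefficient of which is fixed by the exponents except the scalar $\phi(0)=f_1(0)^{\sum_j c_j}\prod_j a_j^{c_j}$ of $(\ref{phi_null_explicit})$, where $(c_1,\ldots,c_{m_1})$ is the integer tuple produced in case $2$ of Theorem \ref{teoprodnull}. This $\phi(0)$ carries the analytic coefficient $f_1(0)$ and is therefore not topological. I would recover it from the tangent cone: by $(\ref{tgcone_case_eq_1})$ the tangent cone is $y^c-f(0)^c\prod_{i=1}^{m_1}x_{1i}^{d_i}$, from which one reads the exponents $c,d_i$ (equivalently the $a_{11i}$) together with the coefficient, which under the identification $\lambda_1=f_1(0)$ of Remark \ref{remvaluation} is the very quantity $f_1(0)$ occurring in $\phi(0)$. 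Since $\phi(0)$ is a monomial in $f_1(0)$ and the $a_{11i}$, it is thereby determined by the tangent cone. Combining the three cases shows that $\Lambda$ depends only on the exponents of the very special monomials, a topological invariant, together with the tangent cone.

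The main obstacle is exactly this matching in the equal case: one must check that the power $\sum_j c_j$ of $f_1(0)$ appearing in $\phi(0)$ is compatible with the power $c$ of the coefficient visible in $(\ref{tgcone_case_eq_1})$, so that $\phi(0)$ is genuinely a function of the tangent cone's coefficient and of topological data, with no residual ambiguity in passing from $f(0)^c$ back to the power of $f_1(0)$ that is needed. The conceptually cleanest way to dispose of this is to bypass the coefficient extraction altogether and use the duality recorded in the excerpt: each $\Lambda_J$ is the projective dual of the projectivization of the halo cone $V_J$, and by $(\ref{xkjnull})$ and $(\ref{tgcone_case_eq_1})$ the cone $V_J$ is the intersection of the tangent cone with a coordinate subspace fixed by the topology; since projective duality is a geometric operation on $V_J$ alone, $\Lambda_J$ is determined by the tangent cone and the topology without any explicit computation of $\phi(0)$. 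Everything else is a routine consequence of \cite{Lipman1988} and the explicit forms of Theorems \ref{case_l_1}, \ref{case_g_1} and \ref{case_eq_1}.
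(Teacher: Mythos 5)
Your proposal is correct and follows exactly the route the paper intends: Corollary \ref{cor1} is read off from the explicit descriptions of $\Lambda$ in Theorems \ref{case_l_1}, \ref{case_g_1} and \ref{case_eq_1}, using Lipman's result that the (very) special monomials and hence all exponents $a_{kij}$ are topological invariants, with the lone non-topological datum $\phi(0)$ in the case $\sum_i a_{11i}=1$ recovered from the coefficient of the tangent cone \eqref{tgcone_case_eq_1}. Your verification that the exponent $\sum_j c_j$ in \eqref{cone} equals the $c$ of \eqref{tgcone_case_eq_1} (so that $\phi(0)$ is genuinely determined by the tangent cone's coefficient and the topology) is exactly the point that needs checking, and your duality remark is a valid alternative way to close it.
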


\begin{corollary}\label{cor2}
If the tangent cone of $Y$ is a hyperplane,  the set of limits of tangents of $Y$ only depends on the topology of $Y$.
\end{corollary}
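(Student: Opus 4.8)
The plan is to reduce everything to the explicit descriptions of $\Lambda$ furnished by Theorems \ref{case_l_1}, \ref{case_g_1} and \ref{case_eq_1}, and to track precisely where analytic (non-topological) data enters. By Corollary \ref{cor1}, $\Lambda$ is determined by the tangent cone together with the topology of $Y$; the topology is encoded in the special monomials, hence in the exponents $a_{kkj}$, which by \cite{Lipman1988} are topological invariants. So the entire task is to show that, under the hyperplane hypothesis, no genuinely analytic quantity survives in the equations defining $\Lambda$.

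First I would determine exactly when the tangent cone is a hyperplane, using the trichotomy on $\sum_{i=1}^{m_1} a_{11i}$. If $\sum_{i=1}^{m_1} a_{11i}>1$, the tangent cone is the coordinate hyperplane $\{y=0\}$ (Theorem \ref{case_g_1}); if $\sum_{i=1}^{m_1} a_{11i}<1$, it is $\{x_{11}\cdots x_{1m_1}=0\}$ (Theorem \ref{case_l_1}), which is a single hyperplane exactly when $m_1=1$. In the remaining case $\sum_{i=1}^{m_1} a_{11i}=1$ the tangent cone is the hypersurface (\ref{tgcone_case_eq_1}), which is homogeneous of degree $c=\sum_{i=1}^{m_1} d_i$; this is a hyperplane only if $c=1$, and $c=1$ forces $m_1=1$ and $a_{111}=1$, i.e. $M_1=x_{11}$ and $Y$ smooth. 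Thus, for a genuine singularity, a hyperplane tangent cone occurs only under the hypotheses of Theorem \ref{case_g_1}, or of Theorem \ref{case_l_1} with $m_1=1$.

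Next I would read off $\Lambda$ in these two cases. In both, Theorems \ref{case_g_1} and \ref{case_l_1} present $\Lambda$ as the union of the linear projective sets $\Lambda_J$, $J\in\mathcal{I}^{*}$, cut out by the equations (\ref{xinull}) (together with $\eta=0$ when $\sum_{i=1}^{m_1} a_{11i}<1$). The index family $\mathcal{I}$, and hence $\mathcal{I}^{*}$, is defined solely through the inequalities $\sum_{j\in I} a_{kkj}\ge 1$: no coefficient such as $f(0)$ or the unit $\phi(0)$ of (\ref{cone}), (\ref{phi_null_explicit}) appears. Since the numbers $a_{kkj}$ are read off from the special monomials, and these are topological invariants, the data $L$, $\mathcal{I}$, $\mathcal{I}^{*}$ and the coordinate hyperplanes (\ref{xinull}) depend only on the topology of $Y$. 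This is exactly the contrast with the case $\sum_{i=1}^{m_1} a_{11i}=1$, where (\ref{cone}) carries the analytic factor $\phi(0)$ of (\ref{phi_null_explicit}); it is precisely this case that the hyperplane hypothesis excludes.

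The main obstacle, and the step deserving the most care, is the case $\sum_{i=1}^{m_1} a_{11i}=1$: one must verify that the tangent cone (\ref{tgcone_case_eq_1}) has degree exactly $c$, so that a hyperplane forces $c=1$, and then confirm that $c=1$ degenerates the situation to a smooth germ (for which the statement is trivial or vacuous) rather than to a singular $Y$ whose $\Lambda$ still records $f(0)$. Once this exclusion is in place, the two surviving cases express $\Lambda$ through manifestly topological data and the corollary follows. A secondary point to check is that the renaming of the variables $x_{kj}$ underlying (\ref{xinull}) is itself dictated by the very special monomials, so that the coordinate hyperplanes it names are topologically canonical and the identification of $\Lambda$ for topologically equivalent germs is legitimate.
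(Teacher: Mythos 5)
Your proposal is correct and follows the paper's (implicit) route: the corollary is meant to be read off from Theorems \ref{case_l_1}, \ref{case_g_1} and \ref{case_eq_1} by observing that a hyperplane tangent cone excludes the case $\sum_{i=1}^{m_1}a_{11i}=1$ (the only case where an analytic coefficient, namely $\phi(0)$ in (\ref{cone}), enters the description of $\Lambda$), leaving $\Lambda$ cut out by coordinate equations indexed solely by the exponents $a_{kkj}$, which are topological invariants by \cite{Lipman1988}. The paper states the corollary without proof, and your case analysis --- including the remark that $c=1$ in the borderline case degenerates to a vacuous situation, since the first special monomial must be genuinely ramified --- is exactly the verification intended.
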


\begin{corollary}\label{cor3}
Let $x_1^{\alpha_1}\cdots x_k^{\alpha_k}$ be the first special monomial of $Y$.
If $\alpha_1+\cdots+\alpha_k\not=1$, the set of limits of tangents of $Y$ only depends on the topology of $Y$.

\end{corollary}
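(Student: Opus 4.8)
The plan is to match the hypothesis against the trichotomy $\sum_i a_{11i}<1$, $\sum_i a_{11i}=1$, $\sum_i a_{11i}>1$ that governs Theorems \ref{case_l_1}, \ref{case_g_1} and \ref{case_eq_1}, and then to read off from the resulting descriptions of $\Lambda$ that no analytic datum intervenes.

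First I would translate the notation. The first special monomial is $M_1=N_1$, which after the renaming of the variables equals $\prod_{i=1}^{m_1}x_{1i}^{a_{11i}}$; thus in the corollary $k=m_1$ and $\alpha_i=a_{11i}$, so the hypothesis $\alpha_1+\cdots+\alpha_k\neq 1$ is exactly $\sum_{i=1}^{m_1}a_{11i}\neq 1$. Consequently $Y$ falls under Theorem \ref{case_l_1} (if the sum is $<1$) or under Theorem \ref{case_g_1} (if the sum is $>1$), and never under Theorem \ref{case_eq_1}.

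In either surviving case these theorems describe $\Lambda$ as the union of the linear projective sets $\Lambda_J$, $J\in\mathcal{I}^{*}$, cut out by the coordinate equations $\xi_{kj}=0$ (supplemented by $\eta=0$ in the first case). The data $L$ and $\mathcal{I}$, and therefore $\mathcal{I}^{*}$, are built solely from the rational exponents $a_{kkj}$ through the inequalities $\sum_{j\in I}a_{kkj}\geq 1$. By \cite{Lipman1988} the special monomials $N_i$ are topological invariants of $Y$; hence all the exponents $a_{kkj}$, the integers $m_k$, the sign of $\sum_i a_{11i}-1$ (and thus the choice of regime), and with them the entire description of $\Lambda$, are determined by the topology of $Y$. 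This already yields the statement. Equivalently one may argue through Corollary \ref{cor1}: in the two surviving regimes the tangent cone is $\{x_{11}\cdots x_{1m_1}=0\}$ or the fixed hyperplane $\{y=0\}$, each depending only on the topological integer $m_1$, so the tangent cone is itself a topological invariant and Corollary \ref{cor1} applies.

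The point deserving the most care --- and the reason the hypothesis cannot be weakened --- is the exclusion of the equality case. When $\sum_i a_{11i}=1$, Theorem \ref{case_eq_1} describes a component of $\Lambda$ by equation (\ref{cone}) and the tangent cone by (\ref{tgcone_case_eq_1}); both involve the coefficient $\phi(0)=f_1(0)^{\sum_j c_j}\prod_j a_j^{c_j}$, in which the value $f_1(0)$ of a unit is an analytic, not a topological, invariant. I would therefore verify explicitly that, by contrast, the strict-inequality descriptions of Theorems \ref{case_l_1} and \ref{case_g_1} involve only the exponents and the coordinate hyperplanes $\{\xi_{kj}=0\}$ and $\{\eta=0\}$, with no such coefficient. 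Once this is confirmed, the topological invariance of $\Lambda$ follows at once.
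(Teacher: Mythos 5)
Your proposal is correct and is essentially the argument the paper intends (the paper states Corollary \ref{cor3} without an explicit proof, leaving it to follow from Theorems \ref{case_l_1} and \ref{case_g_1}): the hypothesis rules out the equality case of Theorem \ref{case_eq_1}, and in the two remaining regimes $\Lambda$ is described purely by the exponents $a_{kkj}$, which are topological invariants by \cite{Lipman1988}. Your explicit remark that the excluded case is precisely where the analytic coefficient $\phi(0)$ (hence $f_1(0)$) enters via (\ref{cone}) and (\ref{tgcone_case_eq_1}) correctly identifies why the hypothesis is needed.
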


\begin{corollary}\label{cor4}
The triviality of the set of limits of tangents of $Y$ is a topological invariant of $Y$.
\end{corollary}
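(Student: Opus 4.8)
The plan is to reduce the statement to the single observation that, in every one of the three regimes of Theorems \ref{case_l_1}, \ref{case_g_1} and \ref{case_eq_1}, the set $\Lambda$ is cut out by data depending only on the exponents $a_{kij}$ of the very special monomials, together with at most one nonzero scalar that cannot affect triviality. Recall that $Y$ has trivial set of limits of tangents exactly when $\Gamma$ is in generic position, i.e.\ when $\Lambda=\Gamma\cap\pi^{-1}(0)$ is a single point. Since the special monomials $N_i$ — and hence the very special monomials $M_k$ together with all the exponents $a_{kij}$ — determine and are determined by the topological type of $Y$ (see \cite{Lipman1988}), it suffices to prove that whether $\Lambda$ is a single point is a function of the $a_{kij}$ alone. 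Note that the regime we are in is itself topological, being governed by whether $\sum_{i=1}^{m_1}a_{11i}$ is less than, greater than, or equal to $1$.

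First I would dispose of the two cases $\sum_{i=1}^{m_1}a_{11i}\neq 1$. Here Theorems \ref{case_l_1} and \ref{case_g_1} exhibit $\Lambda=\bigcup_{J\in\mathcal I^{*}}\Lambda_J$ as a union of linear projective sets, each cut out by linear equations in the $\xi_{kj}$ and $\eta$ whose shape is prescribed by the combinatorial data $L$, $\mathcal I$ and $\mathcal I^{*}$. Since these data are defined purely through the inequalities $\sum_{j\in I}a_{kkj}\geq 1$, the whole configuration $\Lambda$ is determined, up to a linear isomorphism of the ambient projective space, by the $a_{kij}$. In particular $\Lambda$ is a single point precisely when $\mathcal I^{*}$ is a singleton $\{J_0\}$ and the linear space $\Lambda_{J_0}$ has projective dimension $0$ — a condition on the $a_{kij}$. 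This is exactly the content of Corollaries \ref{cor1} and \ref{cor3}, which already give that $\Lambda$ itself is a topological invariant in these cases, so triviality is too.

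The remaining and essential case is $\sum_{i=1}^{m_1}a_{11i}=1$, treated by Theorem \ref{case_eq_1}, where $\Lambda=\bigcup_{J\in\mathcal I^{*}}\Lambda_J$ with each $\Lambda_J$ cut out by the linear equations $(\ref{xinull})$ together with the cone $(\ref{cone})$,
\[
\prod_{i=1}^{m_1}\xi_i^{c_i}-(-1)^{\sum_j c_j}\,\phi(0)\,\eta^{\sum_j c_j}=0 .
\]
Here $L$, $\mathcal I$, $\mathcal I^{*}$ and the integers $c,d_i$ (hence the exponents $c_i$, determined as in case~2 of Theorem \ref{teoprodnull}) depend only on the $a_{kij}$, but the constant $\phi(0)$ of $(\ref{phi_null_explicit})$ involves the coefficient $f_1(0)$ and so is not topological. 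The key point is that $\phi(0)$ is nonzero and enters $(\ref{cone})$ only as the coefficient of $\eta^{\sum_j c_j}$. Replacing $\eta$ by $\lambda\eta$ multiplies this constant by $\lambda^{\sum_j c_j}$; since $\sum_j c_j\geq 1$, one can choose $\lambda\in\mathbb C\setminus\{0\}$ carrying $\phi(0)$ to any prescribed nonzero value while fixing every equation $(\ref{xinull})$ (these involve only the $\xi$'s). Hence, for two germs sharing the same $a_{kij}$, the corresponding sets $\Lambda$ are interchanged by a projective linear automorphism that sends each $\Lambda_J$ to the matching $\Lambda_J$, and this automorphism preserves dimension and the number of irreducible components; so $\Lambda$ is a single point for one germ iff it is for the other.

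Combining the three cases, triviality of $\Lambda$ is in every regime a function of the $a_{kij}$, and therefore a topological invariant, which proves Corollary \ref{cor4}. I expect the main obstacle to be precisely the equal case: one must verify carefully that the non-topological scalar $\phi(0)$ genuinely drops out — that the scaling $\eta\mapsto\lambda\eta$ respects all the defining equations of $\Lambda$ simultaneously and neither merges nor splits irreducible components — so that triviality is strictly preserved and not merely the dimension of a single component.
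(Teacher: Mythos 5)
Your argument is correct, but it is not the route the paper takes. The paper's entire proof of Corollary \ref{cor4} is the extraction, from Theorems \ref{case_l_1}, \ref{case_g_1} and \ref{case_eq_1}, of an explicit combinatorial criterion: $\Lambda$ is trivial if and only if all the exponents of all the special monomials of $Y$ are greater than or equal to $1$; since the special monomials are topological invariants (Lipman), the corollary is immediate. Your proof instead avoids naming any criterion and argues that in every regime $\Lambda$ is determined by the exponents $a_{kij}$ up to a projective linear automorphism of the fibre $\mathbb{P}_n$ --- the only non-topological ingredient being the constant $\phi(0)$ of (\ref{phi_null_explicit}) in the cone equation (\ref{cone}), which you absorb by the rescaling $\eta\mapsto\lambda\eta$ (legitimate, since (\ref{xinull}) involves only the $\xi_{kj}$ with $k\geq 2$ and the cone involves only $\xi_{11},\dots,\xi_{1m_1},\eta$, so the rescaling carries each $\Lambda_J$ of one germ onto the corresponding $\Lambda_J$ of the other). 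This is a genuinely different, and in one respect stronger, argument: it shows that the projective-equivalence class of $\Lambda$ is a topological invariant even in the case $\sum_i a_{11i}=1$, which sharpens Corollary \ref{cor1}, and it makes triviality topological without any case-by-case bookkeeping. What it does not deliver, and what the paper's one-line proof does, is a directly checkable characterization of when triviality holds; if you push your analysis one step further (triviality forces $\mathcal{I}^*=\{L\}$, i.e.\ every singleton lies in $\mathcal{I}$, together with the degeneration of the first block to a single direction), you recover exactly the paper's condition that every exponent be at least $1$. Your stated worry about the equal case --- that the rescaling must respect all defining equations simultaneously and neither merge nor split components --- is resolved by the disjointness of variables just noted, so there is no gap.
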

\begin{proof}
The set of limits of tangents of $Y$ is trivial if and only if all the exponents of all the special monomials of $Y$ are greater or equal than 1.
\end{proof}

\bibliographystyle{amsplain}



\end{document}